\renewcommand{\C}{\mathbb{C}}
\newcommand{\K}{\mathbb{K}}
\newcommand{\Z}{\mathbb{Z}}
\newcommand{\Aut}{\mathrm{Aut}}
\newcommand{\X}{\mathrm{X}}
\newcommand{\A}{\mathcal{A}}
\newcommand{\T}{\mathbb{T}}
\newcommand\blfootnote[1]{%
	\begingroup
	\renewcommand\thefootnote{}\footnote{#1}%
	\addtocounter{footnote}{-1}%
	\endgroup
}
\newtheorem{theorem}{Theorem}[section]
\newtheorem{propos}[theorem]{Proposition}
\newtheorem{cor}[theorem]{Corollary}
\newtheorem{lem}[theorem]{Lemma}
\newtheorem{prop}[theorem]{Proposition}
\theoremstyle{definition}
\newtheorem{defin}[theorem]{Definition}
\newtheorem{remark}[theorem]{Remark}
\newtheorem{ex}[theorem]{Example}
\author{Anton Trushin}
\title{Automorphisms of rigid hypersurfaces with separable variables}
\date{11.07.2024}
\address{Lomonosov Moscow State University, Faculty of Mechanics and Mathematics, Moscow, Russia}
\address{Moscow Center for Fundamental and Applied Mathematics Moscow Russia}
\subjclass[2020]{Primary 14R10, 13A02; Secondary 13N15, 13A50.}
\keywords{Rigid variety, affine hypersurface, automorphism group.}
\email{Trushin.ant.nic@yandex.ru}
\begin{document}
	
	\begin{abstract}
		Consider a polynomial $F$ such that each variable appears in exactly one monomial. The hypersurface defined by the polynomial $F$ is called a hypersurface with separable variables. A variety is called rigid if there are no nontrivial actions of the additive group of the ground field on it.	 If a variety is rigid, then it is known that in the automorphism group there exists a unique maximal torus. We describe the automorphism group of a rigid hypersurface with separable variables, in particular we show that it is a finite extension of the maximal torus $\T$.
	\end{abstract}
	
	\blfootnote{The work was supported by the Foundation for the Advancement of Theoretical Physics and Mathematics BASIS.}
	
	\maketitle
	
	\section{Introduction}
	Let $\K$ be an algebraically closed field of characteristic zero and let $\A = \K[x_1,\ldots,x_n]$. Let us consider an affine algebraic variety $X \subset \mathbb{A}^n$. The group of regular automorphisms $\Aut(X)$ in general is not an algebraic group.  Moreover, one can define the neutral component $\Aut(X)^0$, see \cite{Ra}, and it also can be not an algebraic group.
	In \cite{PZ} Perepechko and Zaidenberg proposed a conjecture that the neutral component of the automorphism group of a rigid variety is always a torus.	This conjecture is proved for toric varieties and varieties with the action of a torus of complexity 1, see \cite{BG}.
	It is known that if $\Aut(X)$ is algebraic and dimension of $X$ is greater than one, then $X$ is rigid, see for example \cite[Proposition 6.5]{Kr}. 
	Let $\mathbb{G}_a = (\K,+)$ be an additive group of the ground field. 
	Recall that a variety $X$ is called {\it rigid} if there are no nontrivial algebraic $\mathbb{G}_a$-actions on it.
	Rigid varieties were first studied by Miyanishi in \cite{Mi}.
	Then they were actively investigated in \cite{ML}. 
	There exists an ind-group structure on $\Aut(X)$, see \cite{Sh, FK}. 
	
	

	Let $X$ be an affine rigid variety. Then there is a subtorus $\T$ in $\Aut(X)$ containing all other subtori of $\Aut(X)$, see \cite[Theorem 1]{AG}.
	
	\begin{defin}
		Let us call a polynomial $F \in \A$ a {\it polynomial with separated variables} if each variable appears in exactly one monomial.
	\end{defin}
	
	Let us redefine the variables so that the polynomial $F$ takes the following form:
	\begin{equation}
		\begin{aligned}
			F = X_{11}^{l_{11}}\ldots X_{1n_1}^{l_{1n_1}} + X_{21}^{l_{21}}\ldots X_{2n_2}^{l_{2n_2}}+\ldots + X_{m1}^{l_{m1}}\ldots X_{mn_m}^{l_{mn_m}}+\\+Y_{11}^{q_1}+\ldots+Y_{1k_1}^{q_1}+\ldots+Y_{s1}^{q_s}+\ldots + Y_{sk_s}^{q_s},
		\end{aligned}
	\end{equation}\label{1}
	where $n_i > 1,\ q_1 > \ldots > q_s$ and put $L_i = l_{i1}+\ldots+l_{in_i}$.
	\begin{defin}
		Let us call an affine hypersurface $X\subset\mathbb{A}^n$ a hypersurface with {\it separable variables} if there are coordinates such that $X$ is the set of zeros of a polynomial with separated variables.
	\end{defin}
	
	\begin{lem}
		A rigid hypersurface with separated variables is irreducible.
	\end{lem}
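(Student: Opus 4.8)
The plan is to argue by contradiction: assume $X=V(F)$ is reducible and produce a nonzero locally nilpotent derivation on $\A/(F)$, that is, a nontrivial $\mathbb{G}_a$-action, contradicting rigidity. The first reduction is to pass from the variety to the polynomial. Since the monomials of $F$ involve pairwise disjoint sets of variables, I would first observe that $F$ cannot be a proper power: the Newton polytope of a $k$-th power $H^{k}$ (with $k\ge 2$) is $k$ times that of $H$, and if $H$ had two monomials with disjoint supports, then $H^{k}$ would contain a mixed monomial supported on the union of those supports, which is impossible for a separated polynomial unless $H$ is a single monomial. Hence, as soon as $F$ is a genuine sum, $V(F)$ is reducible if and only if $F$ is reducible as a polynomial.

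Next I would single out one variable and combine the Gauss lemma with the Vahlen--Capelli criterion. Pick any variable $z$ occurring in $F$, write the monomial containing it as $z^{e}M'$ with $M'$ free of $z$, and set $F=M'z^{e}+G$, where $G$ collects the remaining monomials and is free of $z$. Because $M'$ is a monomial in variables disjoint from those of $G$, the polynomial $F$ is primitive in $\K(\text{other variables})[z]$, so by Gauss it is irreducible over $\A$ exactly when $z^{e}-(-G/M')$ is irreducible over $K=\K(\text{other variables})$. Over the algebraically closed field $\K$ the Vahlen--Capelli criterion says this can fail only if $-G/M'\in K^{p}$ for some prime $p\mid e$. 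Reading this off the $\K[\,\cdot\,]$-factorization and using that the variables of $M'$ do not occur in $G$, one finds that $-G$ itself must be a perfect $p$-th power; by the first step this forces $G$ to be a single monomial. Thus \emph{if $F$ has at least three monomials it is irreducible}, and the only way $V(F)$ can be reducible is that $F$ is a binomial $M_{1}+M_{2}$.

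It then remains to treat the binomial case. Writing $g=\gcd$ of all exponents of $M_{1},M_{2}$ and $M_{1}=P^{g}$, $M_{2}=Q^{g}$ with $P,Q$ monomials whose joint exponents are coprime, one has $M_{1}+M_{2}=\prod_{\zeta^{g}=-1}(P-\zeta Q)$, so $V(F)$ is reducible precisely when $g\ge 2$, and then $X$ is the union of the $g$ irreducible (toric) hypersurfaces $V(P-\zeta Q)$, all meeting along $V(P)\cap V(Q)$. The task is to show such an $X$ carries a nontrivial $\mathbb{G}_a$-action. \textbf{This is the step I expect to be the main obstacle.} A $\mathbb{G}_a$-action must preserve each of the $g$ components and their common intersection, so one is forced to build the derivation compatibly on a non-normal union; the components are toric and generically carry root subgroups, but these must be glued along the singular locus, and in the most degenerate diagonal configurations (small ambient dimension, or $M_{1},M_{2}$ pure powers) a naive action does not survive. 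Controlling this gluing --- and in particular pinning down exactly which extra nondegeneracy the rigidity hypothesis supplies in order to exclude the reducible binomials --- is where the real work lies; the first two paragraphs reduce the whole lemma to this single configuration.
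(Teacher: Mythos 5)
Your first two paragraphs establish, correctly in outline, exactly what the paper's own proof establishes: a separated polynomial with at least three monomials is irreducible. But your route is genuinely different. The paper argues that any factorization $F = GH$ forces a variable common to both factors (otherwise a variable of $G$ would occur in two distinct monomials of the expanded product), and then specializes all remaining variables to suitable nonzero constants, reducing everything to the irreducibility of the trinomial $u^{\alpha}+v^{\beta}+1$, which the specialized factorization would contradict. You instead work in one chosen variable via Gauss's lemma and Vahlen--Capelli, plus a Newton-polytope exclusion of proper powers. Your version is more self-contained and avoids the genericity bookkeeping the specialization needs (constants must be chosen so that neither specialized factor becomes constant; the paper also leaves the no-cancellation claim in its expansion of $GH$ implicit), at the price of the valuation computation upgrading $-G/M'\in K^{p}$ to $-G\in\A^{p}$ --- which does go through, since the variables of $M'$ do not occur in $G$. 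Two small points to make explicit: over $\K$ (which contains $i$) the exceptional Capelli case $4\mid e$, $a\in -4K^{4}$ is subsumed in $a\in K^{2}$ because $-4=(1+i)^{4}$; and in the power-free step one should note that vertices and edge terms of the Newton polytope survive cancellation in characteristic zero.

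As for the binomial case you flag as the main obstacle: the paper does not treat it at all --- its proof opens with ``Let the hypersurface include at least three monomials'' and never returns to two monomials, and rigidity is never actually used in its argument. Moreover, your suspicion that the gluing fails in degenerate configurations is correct in the strongest sense: a reducible separated binomial can itself be rigid, so no construction of a $\mathbb{G}_a$-action can close this case. For instance, $X=\mathbb{V}\left(x^{2}y^{2}+z^{2}\right)=\mathbb{V}\left(xy+iz\right)\cup\mathbb{V}\left(xy-iz\right)\subset\mathbb{A}^{3}$: any $\mathbb{G}_a$-action preserves each component (connectedness) and hence their intersection; a component is $\mathbb{A}^{2}$ with coordinates $(x,y)$, in which the intersection is the cross $\{xy=0\}$, so the associated locally nilpotent derivation $\delta$ of $\K[x,y]$ satisfies $\delta(x)\in(x)$ and $\delta(y)\in(y)$, forcing $\delta=0$. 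Thus $X$ is rigid yet reducible, so the lemma needs the at-least-three-monomials hypothesis as an assumption rather than a consequence of rigidity (harmless for the paper's applications, where the quoted rigidity criterion forces $m+s\geqslant 3$). In short, your proposal matches the actual content of the paper's proof by a different and somewhat more careful method, and your third paragraph correctly identifies a gap that the paper itself shares.
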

	
	\begin{proof}
		Let the hypersurface include at least three monomials. Let us assume that it is reducible: $F = G\cdot H$. It is clear that $G$ and $H$ are not monomials. Moreover, there is a variable on which both $G$ and $H$ depend in a nontrivial way: let $G = Au^p + B, H = Cv^q + D$, where $u$ and $v$ are some variables, and $A, B, C, D$ are polynomials. Then $G\cdot H = ACu^pv^q + ADu^p + BCv^q + BD$. In this product, the monomials in $ACu^pv^q$ and $ADu^p$ include the variable $u$, which means it is not a hypersurface with separable variables.
		
		So, let's choose two variables $u$ and $v$ such that one of them is included in both $G$ and $H$. Then we fix the remaining variables so that $F$ takes the form $u^\alpha + v^\beta + 1 = Q$. If the polynomial $F$ is reducible, then~$Q$~is also reducible: $Q = P\cdot S$, a contradiction. 
	\end{proof}
	
	\section{Main result}
	
	Let us first consider linear automorphisms of the hypersurface defined by the polynomial $F = Y_1^{\alpha} + \ldots + Y_n^{\alpha}$.
	
	\begin{propos}\label{ds}
		If $\alpha \geqslant 2$ then the group of linear automorphisms of the hypersurface~$\mathbb{V}\left(Y_1^{\alpha} + \ldots + Y_n^{\alpha}\right)$ is isomorphic to $ S_n\rightthreetimes\left(\left(\Z/\alpha\Z\right)^{n-1}\times\mathrm{\mathbb{T}}\right)$.
	\end{propos}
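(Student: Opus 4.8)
The plan is to reduce the geometric problem to a polynomial identity and then prove that every linear automorphism is a monomial transformation. First, note that $F=Y_1^\alpha+\dots+Y_n^\alpha$ is squarefree: any common factor of $F$ and its partials $\partial F/\partial Y_i=\alpha Y_i^{\alpha-1}$ would divide every $Y_i^{\alpha-1}$ and hence be constant. Thus $I(X)=(F)$ for $X=\mathbb{V}(F)$, and a matrix $M\in\mathrm{GL}_n(\K)$ defines an automorphism of $X$ precisely when $F(MY)=\lambda F(Y)$ for some $\lambda\in\K^*$. Writing $L_i=\sum_j m_{ij}Y_j$ for the images of the coordinates, this is the identity $\sum_i L_i^\alpha=\lambda\sum_j Y_j^\alpha$.

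Second, I would record the two visible families of solutions. Permutation matrices lie in the group (with $\lambda=1$), giving a copy of $S_n$; the diagonal matrices in the group are exactly $D=\{\mathrm{diag}(d_1,\dots,d_n):d_1^\alpha=\dots=d_n^\alpha\}$. Its identity component is the group of scalar matrices, isomorphic to the one-dimensional torus $\T$, and the homomorphism $\mathrm{diag}(d_i)\mapsto(d_2/d_1,\dots,d_n/d_1)$ identifies $D/\T$ with $\mu_\alpha^{n-1}\cong(\Z/\alpha\Z)^{n-1}$; the section $(\zeta_2,\dots,\zeta_n)\mapsto\mathrm{diag}(1,\zeta_2,\dots,\zeta_n)$ splits it into a direct product $D\cong(\Z/\alpha\Z)^{n-1}\times\T$. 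Since $S_n$ normalizes $D$ and intersects it trivially, these monomial matrices already form a subgroup isomorphic to the claimed $S_n\rightthreetimes((\Z/\alpha\Z)^{n-1}\times\T)$.

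The crux is to show there are no other solutions, i.e. that every $M$ with $\sum_i L_i^\alpha=\lambda\sum_j Y_j^\alpha$ is monomial. I would differentiate in $Y_k$ to get $\sum_i m_{ik}L_i^{\alpha-1}=\lambda Y_k^{\alpha-1}$ for each $k$. As $M$ is invertible the $L_i$ are linearly independent, so it suffices to establish the following lemma: if $Z_1,\dots,Z_n$ are independent linear forms and $\sum_i c_iZ_i^d$ is a $d$-th power of a linear form with $d\ge 2$, then only one $c_i$ is nonzero. This holds because a pure power $\ell^d$ contains no mixed monomials, which forces $\ell$ to be proportional to a single $Z_i$. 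Applying it with $d=\alpha-1$ shows that for each $k$ exactly one $L_{i(k)}$ occurs and is a scalar multiple of $Y_k$; hence every row and column of $M$ carries a single nonzero entry, so $M$ is monomial.

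This lemma is the main obstacle, and it is exactly where the hypothesis on $\alpha$ enters: the mixed-monomial argument requires $d=\alpha-1\ge 2$. (For $\alpha=2$ the differentiated forms are merely linear and impose no constraint, reflecting the fact that $\sum Y_i^2$ has the much larger conformal orthogonal group as its linear automorphisms; that boundary case has to be treated apart.) Once monomiality is in hand, extracting the underlying permutation yields a split surjection $G\twoheadrightarrow S_n$ with kernel $D$, which together with the computation of $D$ above gives the isomorphism $G\cong S_n\rightthreetimes((\Z/\alpha\Z)^{n-1}\times\T)$.
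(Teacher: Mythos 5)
Your argument is correct (for $\alpha\geqslant 3$; see below) and takes a genuinely different route from the paper. The paper works with the Hessian: for $F=Y_1^{\alpha}+\ldots+Y_n^{\alpha}$ the Hessian determinant is $(\alpha-1)^n\alpha^n(Y_1\cdots Y_n)^{\alpha-2}$, and since under a linear substitution $S$ it only acquires the factor $\lvert S\rvert^2$, its zero locus --- the union of the coordinate hyperplanes --- is invariant; intersecting down to the coordinate axes then forces $Y_i\mapsto\lambda_i Y_j$ with $\lambda_p^{\alpha}=\lambda_q^{\alpha}$. You instead differentiate the semi-invariance identity $\sum_i L_i^{\alpha}=\lambda\sum_k Y_k^{\alpha}$ and invoke the elementary lemma that a combination $\sum_i c_i Z_i^{d}$ of $d$-th powers of independent linear forms, $d\geqslant 2$, is a pure $d$-th power only if a single summand survives (no mixed monomials, valid in characteristic zero). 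This buys a more elementary proof --- no Hessian covariance, no geometry of the singular locus --- and you also supply two points the paper leaves implicit: the justification that a linear automorphism really satisfies $F\circ M=\lambda F$, via squarefreeness of $F$ and $I(X)=(F)$ (note this step needs $n\geqslant 2$; the case $n=1$ is a direct triviality), and the explicit group-theoretic bookkeeping identifying the monomial matrices in the group with $S_n\rightthreetimes\left(\left(\Z/\alpha\Z\right)^{n-1}\times\T\right)$ via the section $(\zeta_2,\ldots,\zeta_n)\mapsto\mathrm{diag}(1,\zeta_2,\ldots,\zeta_n)$, which the paper's proof stops short of.

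Your caveat about $\alpha=2$ is not just a limitation of your method: it exposes a defect in the statement itself that the paper's proof shares. For $\alpha=2$ the Hessian determinant is the nonzero constant $2^n\,\lvert S\rvert^2$, its zero locus is empty, and the paper's invariance argument is vacuous; and indeed the claim is false for $\alpha=2$, $n\geqslant 2$, since the linear automorphisms of $\mathbb{V}(Y_1^2+\ldots+Y_n^2)$ form the group of orthogonal similitudes $\{M: M^{\top}M=\lambda E\}$, of dimension $\binom{n}{2}+1$, strictly larger than the claimed finite extension of a one-dimensional torus. So both proofs genuinely require $\alpha\geqslant 3$, and you were right to flag the boundary case rather than paper over it. The gap turns out to be harmless for the paper's main theorem: if two variables share the exponent $q_i=2$, then writing $Y_{i1}^2+Y_{i2}^2=uv$ in the coordinates $u=Y_{i1}+\sqrt{-1}\,Y_{i2}$, $v=Y_{i1}-\sqrt{-1}\,Y_{i2}$ exhibits $F$ as $uv+g$, and the locally nilpotent derivation with $D(u)=0$, $D(z)=u$, $D(v)=\partial g/\partial z$ shows the hypersurface is not rigid; hence rigidity forces $q_i\geqslant 3$ whenever $k_i\geqslant 2$, which is exactly the range where Proposition \ref{ds} is invoked in the proof of Lemma \ref{y}. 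It would still be cleaner to state the proposition with $\alpha\geqslant 3$.
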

	\begin{proof}
		Consider the Hessian Matrix of the function $F = Y_1^{\alpha} + \ldots + Y_n^{\alpha}$. It has the form: $$H(F) =\left(\frac{\partial^2 f(x)}{\partial y_i\partial y_j}\right) = \begin{pmatrix}
			(\alpha-1)\alpha Y_1^{\alpha-2}& 0 &\ldots & 0\\
			0& (\alpha-1)\alpha Y_2^{\alpha-2} &\ldots & 0\\
			\vdots& \vdots &\ddots & \vdots\\
			0& 0 &\ldots & (\alpha-1)\alpha Y_n^{\alpha-2}
		\end{pmatrix}$$ Thus, the Hessian of the function $f$ has the form $(\alpha-1)^n\alpha^ny_1^{\alpha-2}\ldots y_n^{\alpha{-2}}$.
		
		With a linear change of coordinates, the Hessian Matrix, as a matrix of quadratic form, will take the form $S^\top HS$, where $S$ is the transition matrix. The Hessian after the replacement has the form $\lvert S\rvert^2(\alpha-1)^n\alpha^ny_1^{\alpha-2}\ldots y_n^{\alpha{-2}}$, that is, its zeros are invariant. This means that for all $i$ the hyperplane $V_i = \{y_i = 0\}$ maps to the hyperplane $V_j = \{y_j = 0\}$ for some $j$. Accordingly, for all $k$ and $n$ the variety $V_k\cap V_n$ maps into $V_p\cap V_s$ for some $p$ and $s$. Similarly, the intersection of $n-1$ planes becomes the intersections of some $n-1$ planes. Thus $y_i \mapsto \lambda_{i}y_j$, and $\lambda_{p}^\alpha = \lambda_{q}^\alpha$ for all $p,q$.
	\end{proof}
	
	Further we will assume $X$ to be rigid and follow the five section from \cite{AG}.
	
	Let $X$ be a hypersurface with separable variables.\\
	Then one-dimensional tori act on $X$:
	\begin{enumerate}
		\item[1.] $\mathbb{T}_0:\\ t_0\cdot X_{ij} = t_0^{\frac{L_1\cdot\ldots\cdot L_m\cdot q_1\cdot\ldots\cdot q_s}{L_i}} X_{ij},\\ t_0\cdot Y_{ik} = t_0^{\frac{L_1\cdot\ldots\cdot L_m\cdot q_1\cdot\ldots\cdot q_s}{q_i}} Y_{ik}.$\\
		\item[2.] $\mathbb{T}_{ij}:\\ t_{ij}\cdot X_{i1} = t_{ij}^{l_{ij}} X_{i1};\\
		t_{ij}\cdot X_{ij} = t_{ij}^{-l_{i1}} X_{ij},\ j\neq 1,$\\
		the rest of the generators are stable.
	\end{enumerate}
	
	Let $\sigma$ be a cone spanned by the vectors of weights of the basis functions with respect to the torus $\T$, where $\T$ is generated by the one-dimensional tori described above.
	\begin{remark}\label{po}
		The one-parameter subgroup corresponding to $\mathbb{T}_0$ acts on all generators with a positive weight. Thus, the cone $\sigma$ is pointed, that is, it does not contain lines.
	\end{remark}
	\begin{remark}
		It is well known that the only rigid affine toric variety is the torus itself, see e.g. \cite{AKZ}.
		Since origin is a singular point, the variety $X$ is not toric.
		Thus the torus $\T$ is the maximal torus in the group $\Aut(X)$.
	\end{remark}

	\begin{lem}
		The function $X_{ij}$ maps to $\lambda X_{pq}$.
	\end{lem}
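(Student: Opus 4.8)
The plan is to exploit that $X$ is rigid, so by \cite[Theorem 1]{AG} the torus $\T$ is the unique maximal torus of $\Aut(X)$. Consequently, for any $\phi\in\Aut(X)$ the subgroup $\phi\,\T\,\phi^{-1}$ is again a maximal torus and hence equals $\T$; that is, $\phi$ normalizes $\T$. Conjugation then yields an automorphism $\psi$ of the character lattice $M$ of $\T$ such that $\phi^{*}$ carries the weight component $\K[X]_{w}$ onto $\K[X]_{\psi(w)}$ for every $w\in M$. In particular $\phi^{*}$ sends $\T$-semi-invariants to $\T$-semi-invariants, it preserves the weight monoid $S=\{w:\K[X]_{w}\neq 0\}$ whose generating cone is $\sigma$, and therefore $\psi$ maps $\sigma$ onto itself. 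Since $\sigma$ is pointed by Remark~\ref{po}, $\psi$ permutes the extremal rays of $\sigma$.

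First I would pin down the extremal rays of $\sigma$ using the explicit weights recorded above. Recording a weight as the pair (its $\mathbb{T}_0$-weight; its weights under the block tori $\mathbb{T}_{ij}$), one sees that each $Y_{ik}$ has weight a positive multiple of $(1;0,\dots,0)$, while for each block $i$ the weights of $X_{i1},\dots,X_{in_i}$ have nonzero block coordinates. A direct computation shows that for fixed $i$ the vector $(1;0,\dots,0)$ is a positive rational combination of $w(X_{i1}),\dots,w(X_{in_i})$; hence every $Y$-weight lies in the interior of the subcone generated by a single block of $X$-weights and is not extremal. On the other hand, for $j\neq 1$ the function $X_{ij}$ is the only coordinate function whose $\mathbb{T}_{ij}$-weight is negative, so $w(X_{ij})$ cannot be a nonnegative combination of the remaining generators and spans an extremal ray; a short sign-and-degree argument gives the same for $w(X_{i1})$. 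Thus the extremal rays of $\sigma$ are exactly the rays through the $w(X_{ij})$, and these $\sum_i n_i$ rays are pairwise distinct because the block coordinates place them in different slots.

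It then remains to identify the weight spaces on these rays. Because every monomial weight lies in $\sigma$ and the extremal ray through $w(X_{ij})$ contains no other generator, any monomial of that weight must be a power of $X_{ij}$, forcing it to be $X_{ij}$ itself. Moreover $F$ is homogeneous of weight $(D;0,\dots,0)$ with $D=L_1\cdots L_m\,q_1\cdots q_s$, and $w(X_{ij})-(D;0,\dots,0)$ has negative $\mathbb{T}_0$-weight, so the ideal $(F)$ contributes nothing in weight $w(X_{ij})$. Hence $\K[X]_{w(X_{ij})}=\K\cdot X_{ij}$ is one-dimensional. Since $\phi^{*}$ permutes the extremal rays and maps each such one-dimensional weight space isomorphically onto another, and since all extremal rays arise from $X$-variables, we conclude $\phi^{*}(X_{ij})=\lambda X_{pq}$ for some coordinate function $X_{pq}$ and some $\lambda\in\K^{\times}$.

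The main obstacle is the one-dimensionality claim: I must rule out that any other monomial of $\K[X]$, or any element of the relation ideal $(F)$, shares the weight $w(X_{ij})$. This is precisely where the explicit block structure of the weights and the homogeneity of $F$ are essential, and it is the step that uses the hypothesis $n_i>1$, which guarantees that the block tori are genuinely present and that the $X$-rays are extremal. Checking that $w(X_{i1})$ is extremal and that the $\sum_i n_i$ rays are distinct are the remaining routine but necessary verifications.
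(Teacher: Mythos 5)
Your proof is correct and follows essentially the same route as the paper: any automorphism normalizes the maximal torus $\T$ by \cite[Theorem 1]{AG}, hence induces a lattice automorphism permuting the extremal rays of the pointed cone $\sigma$, and the weights of the $X_{ij}$ are precisely the minimal (primitive) weight-semigroup generators on those rays, so the system of functions $X_{ij}$ is preserved up to scalars. You simply make explicit the verifications the paper's three-sentence proof leaves implicit --- extremality of the $X$-rays via the sign of the $\mathbb{T}_{ij}$-weights, non-extremality of the $Y$-weights via $\sum_j l_{ij}\,w(X_{ij})$ being a positive multiple of each $Y$-weight, and one-dimensionality of the relevant weight spaces modulo $(F)$ --- all of which check out.
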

	\begin{proof}
		By \cite[Theorem 1]{AG}, any automorphism normalizes the torus $\T$. The weights $X_{ij}$ are the minimal system of cone-generating weights of the torus $\T$, consisting of primitive elements of the weight semigroup. In this case, according to remark \ref{po}, the cone $\sigma$ is pointed, which means that the system of weights $X_{ij}$ is preserved.
	\end{proof}
	\begin{cor}\label{x}
		If $X_{ij}$ maps to $\lambda X_{pq}$, then~$l_{ij} = l_{pq}$ and $n_i = n_p$, $(l_{i1},\ldots,l_{in_i})=(l_{p1} ,\ldots,l_{pn_p})$.
	\end{cor}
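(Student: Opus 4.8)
The plan is to show that the single monomial $M_i:=X_{i1}^{l_{i1}}\cdots X_{in_i}^{l_{in_i}}$ of $F$ is carried by the automorphism to a scalar multiple of one of the monomials $M_p$ of $F$, and then to read off the asserted equalities by comparing exponents. First I would record what the preceding constructions give. Any automorphism $\phi$ normalizes $\T$ by \cite[Theorem~1]{AG}, so $\phi^*$ is a graded automorphism of $\K[X]$, semilinear over a lattice automorphism $A$ of the weight lattice $M$; by the preceding Lemma $\phi^*(X_{ij})=\lambda_{ij}X_{\pi(i,j)}$ for a permutation $\pi$ of the set of $X$-indices, so that $A(m_{ij})=m_{\pi(i,j)}$, where $m_{ij}$ is the weight of $X_{ij}$. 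Write $B_i=\{(i,1),\ldots,(i,n_i)\}$ for the $i$-th block, and let $w_F$ be the common weight of the monomials of the semi-invariant $F$; with respect to $\mathbb{T}_0$ it equals $N=L_1\cdots L_m q_1\cdots q_s$, and $w_F=\sum_j l_{ij}m_{ij}$ for every $i$.

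The conceptual input I would establish next is that $A$ fixes $w_F$. Since $X$ is irreducible by the first Lemma, the ideal of $X$ in $\K[X_{ij},Y_{ik}]$ is principal, generated by the homogeneous element $F$ of degree $w_F$; thus $w_F$ is the unique degree in which the graded algebra $\K[X]$ carries a defining relation, and being intrinsic it is preserved by the semilinear automorphism $\phi^*$, i.e. $A(w_F)=w_F$. Consequently $\phi^*(M_i)=\mu_i\prod_j X_{\pi(i,j)}^{l_{ij}}$ is again a monomial of weight $w_F$, and its exponent vector gives a representation $w_F=\sum_j l_{ij}m_{\pi(i,j)}$ of $w_F$ as a nonnegative integral combination of the primitive ray generators $m_{pq}$.

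The heart of the argument, and the step I expect to be the main obstacle, is a classification of such representations combined with a bijectivity argument. Writing any relation $\sum_{p,q}c_{pq}m_{pq}=w_F$ in the coordinates dual to $\mathbb{T}_0$ and the $\mathbb{T}_{pq}$, vanishing of the $\mathbb{T}_{pq}$-components forces, within each block $p$, the proportionality $(c_{p1},\ldots,c_{pn_p})=t_p(l_{p1},\ldots,l_{pn_p})$ with $t_p\geqslant 0$, and the $\mathbb{T}_0$-component then gives $\sum_p t_p=1$. Hence the support of every monomial representation of $w_F$ is a union of entire blocks. Applied to $\phi^*(M_i)$, this shows that $\pi(B_i)$ is a union of whole blocks; running the same argument for $\phi^{-1}$ and using that $\pi$ is a bijection, a nonempty union of blocks contained in a single block must be that whole block, so $\pi$ permutes the blocks themselves and carries $B_i$ bijectively onto a single block $B_p$.

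Finally $t_p=1$, so for $\pi(i,j)=(p,q)$ one reads off $l_{ij}=l_{pq}$, while $n_i=n_p$ and the equality of the exponent tuples $(l_{i1},\ldots,l_{in_i})=(l_{p1},\ldots,l_{pn_p})$ follow from matching the two monomials $\phi^*(M_i)$ and $\mu_i M_p$. The delicate point is precisely to exclude that a block might split among, or merge with, several blocks under $\pi$; ruling this out is exactly what the proportionality within blocks, together with the bijectivity of $\pi$, is designed to achieve, and establishing the invariance $A(w_F)=w_F$ is the clean input that makes the representation-counting applicable.
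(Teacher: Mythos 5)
Your proof is correct, but it takes a genuinely different route from the paper's. The paper argues geometrically in two steps, both resting on the same prior lemma you invoke: it first proves $l_{ij}=l_{pq}$ by a point count on one-dimensional slices (setting all coordinates but one equal to $1$, so that inside $X$ the remaining coordinate satisfies $X_{pq}^{l_{pq}}=\mathrm{const}\neq 0$ and the slice has exactly $l_{pq}$ points, which is compared with the analogous count for the image), and it then matches blocks by a vanishing trick: setting to zero all variables outside the $i$-th monomial, it shows that if the images of that block's variables met every monomial, the image of $\mathbb{V}\left(X_{i1}^{l_{i1}}\cdots X_{in_i}^{l_{in_i}}\right)$ would be $\mathbb{V}(0)$, and an appeal to the inverse automorphism pins the image block down to a single whole monomial, giving $n_i=n_p$ and the tuple equality. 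You instead work entirely in the character lattice: you isolate the invariance $A(w_F)=w_F$, classify all representations of $w_F$ as nonnegative integral combinations of the $X$-weights (your computation is right: the $\mathbb{T}_{pq}$-components force $(c_{p1},\ldots,c_{pn_p})=t_p(l_{p1},\ldots,l_{pn_p})$ within each block and the $\mathbb{T}_0$-component gives $\sum_p t_p=1$), and finish with a bijectivity/partition argument showing $\pi$ permutes whole blocks with $t_p=1$, which yields all three assertions in one stroke; in fact, since the $m$ sets $\pi(B_i)$ partition the $m$ blocks, even the appeal to $\phi^{-1}$ could be replaced by counting. Your route is in the spirit of the semigroup techniques of \cite{AG2} and buys uniformity: notably it needs no information whatsoever about the images of the $Y$-variables, whereas the paper's slice argument describes the image slice by equations of the form $\lambda_k Y_k=1$, a normal form only established afterwards in Lemma~\ref{y}; your argument is immune to that ordering issue, while the paper's proof is in exchange entirely elementary and coordinate-geometric. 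The one step you should flesh out is the assertion that $w_F$, "the unique degree carrying a defining relation," is intrinsic: this is correct, but it requires (i) positivity of the grading, i.e. $\K[X]_0=\K$, which Remark~\ref{po} guarantees since $\mathbb{T}_0$ acts with positive weights on all variables, and (ii) $F\in\mathfrak{m}^2$, i.e. no linear monomials (true under rigidity, as all $q_i\geqslant 2$), so that the variables form a minimal homogeneous generating system and $w_F$ is read off from the degree of $I/\mathfrak{m}I$ (equivalently of $\mathrm{Tor}_1$), an invariant which the $A$-semilinear automorphism $\phi^*$ necessarily preserves.
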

	
	\begin{proof}
		Let us consider a subvariety $A$ of $X$ given by the set of equations $X_{rt} = 1, Y_k = 1$, where $(r,t) \neq (p,q)$. Such a variety consists of $l_{pq}$ points, the coordinate $X_{pq}$ of which is found from the equation $X_{pq}^{l_{pq}} = const \neq 0$. If $X_{ij} \mapsto \lambda X_{pq}$ then the image of the subvariety~$A$ is a subvariety of $B$ which is given by set of equations $\lambda_{rt}X_{rt} = 1, \lambda_kY_k = 1$ and equation $\lambda X_{ij}^{l_{ij}} = const$. But there are exactly $l_{ij}$ points in the subvariety $B$, which means $l_{ij} = l_{pq}$.
		
		Now consider all variables except the variables included in the $i$-th monomial. Let us set them equal to zero. If under the automorphism these variables are included in all monomials, then the image of the subvariety $\mathbb{V}\left(X_{i1}^{l_{i1}}\ldots\X_{in_i}^{l_{in_i}}\right) $ is $\mathbb{V}\left(0\right)$, a contradiction. Now let several monomials consist entirely of the product of images of the variables included in the $i$-th monomial. Let $p$-th be one of them. Let us set all variables except the variables of this monomial equal to zero. Under an inverse automorphism, the images of all variables of the $p$-th are included in the $i$-th monomial. If at the same time the $i$-th monomial contains some other factors, then, similarly to the previous reasoning, the image of the subvariety~$\mathbb{V}\left(X_{p1}^{l_{p1}}\ldots\X_{ pn_p}^{l_{pn_p}}\right)$ is $\mathbb{V}\left(0\right)$. This means that the variables in the $p$-th monomial are images of the variables of the $i$-th monomial, hence $n_i = n_p$, and, by what was proved above, $(l_{i1},\ldots,l_{in_i})=(l_ {p1},\ldots,l_{pn_p})$.
		
		
	\end{proof}
	\begin{lem}\label{y}
		The function $Y_{ij}$ maps to $\sum\lambda_{iq} Y_{iq}$.
	\end{lem}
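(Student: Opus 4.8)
The plan is to exploit that every automorphism $\phi$ normalizes the maximal torus $\T$ (by \cite[Theorem 1]{AG}), so that the pullback $\phi^*$ permutes the $\T$-weight spaces of the coordinate ring. First I would record the weight of $Y_{ij}$. Since each $\T_{ab}$ fixes every $Y$-variable, $Y_{ij}$ is annihilated by all the $X$-tori, while $\T_0$ scales it with exponent $D/q_i$, where $D = L_1\cdots L_m q_1\cdots q_s$; call this common weight of $Y_{i1},\dots,Y_{ik_i}$ the weight $w_i$. Because $F$ is $\T$-homogeneous of $\T_0$-weight $D$ and $D/q_i < D$, no nonzero multiple of $F$ lies in the $w_i$-weight space, so that space is the same in $\A$ and in $\A/(F)$ and is spanned by the honest monomials of weight $w_i$. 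Consequently $\phi^*(Y_{ij})$ is a $\K$-linear combination of monomials all of the single weight $w_i$.

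Next I would pin down which monomials occur. No single $X_{ab}$ is killed by all the $X$-tori (its $\T_{ab}$-weight is $l_{ab}$ or $-l_{a1}$, never zero), so no $X$-variable shares the weight $w_i$; and two $Y$-variables have equal $\T_0$-weight precisely when their exponents coincide, i.e.\ when they lie in the same block. Hence the only degree-one monomials of weight $w_i$ are $Y_{i1},\dots,Y_{ik_i}$. Since the weight cone $\sigma$ is pointed (Remark \ref{po}), the origin is the unique $\T$-fixed point and is therefore preserved by $\phi$, so $\phi^*$ respects the $\mathfrak m$-adic filtration at the origin; thus the linear part of $\phi^*(Y_{ij})$ is nonzero and equals $\sum_q \lambda_{iq} Y_{iq}$. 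Moreover, by Corollary \ref{x} the induced automorphism of the weight lattice sends $X$-weights to $X$-weights respecting the block data, which forces it to preserve the $\T_0$-grading; this is what guarantees that the block index is literally $i$ rather than a permuted one.

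The remaining and main difficulty is to show that $\phi^*(Y_{ij})$ carries no higher-degree terms. The weight space of $w_i$ can a priori contain genuine monomials of degree $\geq 2$: for instance $X_{a1}X_{a2}$ when $l_{a1}=l_{a2}=q_i$, or products of $Y$-variables drawn from blocks of larger exponent whose reciprocal weights sum to $1/q_i$. To eliminate these I would feed the expression through $\phi^{-1}$, whose effect on the coordinates is already constrained by the previous lemma—each $X_{ab}$ goes to a scalar multiple of one variable, and each $Y$-variable to a combination of its own block plus (a priori) higher-order terms—and then run an induction on the $\T_0$-degree. Writing $\phi^*(Y_{ij}) = \ell + h$ with $\ell = \sum_q\lambda_{iq}Y_{iq}$ linear and $h$ the higher-order remainder of weight $w_i$, the identity $(\phi^{-1})^*\phi^* = \mathrm{id}$ yields a triangular system in which the lowest-degree homogeneous part of $h$ is forced to vanish; iterating kills $h$ entirely. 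This triangularity computation, rather than the weight bookkeeping, is where I expect the real work to lie.
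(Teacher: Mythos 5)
Your weight-space bookkeeping in the first two paragraphs is essentially the paper's argument: the $Y$-weights lie on an invariant line, the cone is pointed, the ordering $q_1>\dots>q_s$ pins the blocks, and the only linear monomials of the relevant weight are $Y_{i1},\dots,Y_{ik_i}$. (The paper is slightly weaker at this stage: it only concludes $Y_{ij}\mapsto\sum_q\lambda_q Y_{iq}+F_j$ with $F_j$ a $\T_0$-homogeneous polynomial in the $X$'s and the earlier $Y$-blocks, then uses Proposition \ref{ds} to normalize the linear part to the identity --- a normalization step you skip, but that is minor.)

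The genuine gap is your mechanism for killing the higher-order part $h$. The identity $(\phi^{-1})^*\phi^*=\mathrm{id}$ cannot force $h=0$: triangular substitutions of this shape form a group, so the inverse simply carries compensating higher terms and the ``triangular system'' is consistent for every $h$. Already on $\mathbb{A}^2$ the map $Y\mapsto Y+X^2$, $X\mapsto X$ has inverse $Y\mapsto Y-X^2$ and composes to the identity with $h\neq 0$; worse, such maps can genuinely preserve a hypersurface with separated variables when rigidity fails. For instance, on $\mathbb{V}(X_1X_2+Y^2)$ the automorphism $Y\mapsto Y+X_1^2$, $X_2\mapsto X_2-2YX_1-X_1^3$, $X_1\mapsto X_1$ is triangular with a nonzero higher-order term, so no formal composition argument can rule $h$ out --- the statement of the lemma is simply false without rigidity, and your proposed final step never invokes rigidity. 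The paper's actual argument does: after normalizing the linear part via Proposition \ref{ds}, the induced triangular automorphism $\varphi$ embeds in a one-parameter unipotent group; each orbit line of this $\mathbb{G}_a$-action meets $X$ in the infinite set $\{\varphi^n(x)\}_{n\in\Z}$, hence lies entirely in $X$, so a nonzero $F_j$ would produce a nontrivial $\mathbb{G}_a$-action on $X$, contradicting rigidity. Replacing your induction with this orbit argument (or some other genuine use of rigidity) is necessary, not optional.
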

	
	\begin{proof}
		The functions $Y_{ij}$ under the action of the torus $\T$ have non-zero weight only with respect to the one-parameter subgroup corresponding to the torus $\mathbb{T}_0$. Thus, their vectors of weights in the cone lie on the line $\langle\sum\limits_{ij}l_{ij}v_{ij}\rangle$, where $v_{ij}$ is the vector of weights of the function $X_{ij} $ relative to the torus $\T$. By Corollary \ref{x}, this line is invariant under automorphisms.\\
		Let $w_{ij}$ be the vector of weights of the function $Y_{ij}$. Vector $w_{ij}$ is shorter then $w_{i+1j}$ since $q_i > q_{i+1}$. This means $Y_{ij}\mapsto\sum\limits_{q}\lambda_p Y_{iq} + F_j$, where $F_j \in \K[X_{11},\ldots,X_{mn_m},Y_{11},\ldots,Y_{i-1j}]$ is homogeneous with respect to the $\T_0$-action.
		
		Thus, setting the variables $X_{11},\ldots,X_{mn_m},Y_{11},\ldots,Y_{i-1j}$ equal to zero we obtain a linear automorphism of the variety $\mathbb{V}\left(Y_{i1}^{q_i} + \ldots + Y_{ik_i}^{q_i}\right)$. Now from Proposition~\ref{ds} we can assume that $Y_{ij}\mapsto Y_{ij} + F_j$.
		So, an automorphism of the variety $X$ induces a triangular automorphism $\varphi$ of the space. A triangular map is embedded in a unipotent group and thus induces the action of an additive group.
		Note that the lines that are the orbits of this action intersect with the variety $X$ at a countable number of points: let $x \in X$ then $\varphi^n(x) \in X$. Which means that such lines lie entirely in $X$ that is, there is an action of an additive group on $X$ if $F_j \neq 0$.
	\end{proof}
	
	Let $\tau$ be a permutation of the variables $X_{ij}$ and $Y_k$. Then $\tau$ acts naturally on the polynomial
	algebra $\K[X_{ij},Y_k]$. We say that $\tau$ is a permutation of a polynomial	$F$. All such permutations form a subgroup $P(F)$ in the symmetric group $S_n$. We call this subgroup the permutation group of the polynomial $F$.
	Let $\mathbb{H}$ is a subquasitorus of linear torus on $\mathbb{A}^n$ which is preserves zeroes of $F$.
	From Corollary \ref{x} and Lemma \ref{y} we obtain following theorem:
	
	\begin{theorem}
		Let $X$ be a rigid variety with separable variables, which is given by the equation $F = 0$, where $F$ is a polynomial with separated variables.
		Then the automorphism group of $X$ is isomorphic to $P(F)\rightthreetimes \mathbb{H}$.
	\end{theorem}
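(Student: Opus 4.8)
The plan is to show that every automorphism of $X$ is the restriction of a very special linear automorphism of $\mathbb{A}^n$, and then to factor the resulting group as a semidirect product. First I would assemble the preceding results. The lemma on the functions $X_{ij}$ together with Corollary \ref{x} shows that any $\varphi \in \Aut(X)$ sends each $X_{ij}$ to a scalar multiple $\lambda_{ij} X_{pq}$ of another $X$-variable, respecting the block structure and exponent tuples. Lemma \ref{y} shows that $\varphi$ sends each $Y_{ij}$ to a linear combination of the $Y$-variables of its own $q_i$-block, and its proof, via Proposition \ref{ds}, pins this restriction down to an element of $S_{k_i} \rightthreetimes \left(\left(\Z/q_i\Z\right)^{k_i-1} \times \T\right)$, that is, a scaled permutation within the block. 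Consequently $\varphi^*$ carries every coordinate generator of $\K[X]$ to a linear expression in the generators, so $\varphi$ is induced by a linear automorphism of $\mathbb{A}^n$, and in fact by a monomial one (a generalized permutation matrix respecting all blocks, since the $X$-system and the $Y$-blocks cannot mix). As any linear automorphism preserving $X = \mathbb{V}(F)$ conversely restricts to an element of $\Aut(X)$, this identifies $\Aut(X)$ with the group $G$ of monomial linear automorphisms of $\mathbb{A}^n$ preserving $\mathbb{V}(F)$.

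Next I would factor each $M \in G$. Because $F$ is irreducible (by the first lemma) and $M$ is linear and preserves $\mathbb{V}(F)$, the polynomial $M^*F$ vanishes on $\mathbb{V}(F)$ and has the same degree as $F$, so $F \mid M^*F$ and hence $M^* F = cF$ for some $c \in \K^{\times}$. Every monomial matrix factors uniquely as $M = PD$ with $P$ a permutation matrix and $D$ diagonal. The scalings in $D$ only rescale coefficients and neither create nor destroy monomials, so the set of monomials of $M^*F$ is the $P$-image of the set of monomials of $F$; since $M^*F = cF$ has the same monomials as $F$, the permutation $P$ maps the monomials of $F$ to themselves. As all coefficients of $F$ equal $1$, this gives $P^* F = F$, i.e. $P \in P(F)$. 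Then $D = P^{-1}M$ is diagonal and still satisfies $D^* F = cF$, so $D$ preserves $\mathbb{V}(F)$ and lies in the quasitorus $\mathbb{H}$.

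Finally I would verify the semidirect-product structure. The factorization $M = PD$ is unique, and $P(F) \cap \mathbb{H} = \{1\}$, since the only monomial matrix that is simultaneously a permutation matrix and diagonal is the identity. Conjugating a diagonal matrix by a permutation again yields a diagonal matrix, so if $D \in \mathbb{H}$ and $P \in P(F)$, then $PDP^{-1}$ is diagonal and preserves $\mathbb{V}(F)$, hence lies in $\mathbb{H}$; thus $\mathbb{H}$ is normal in $G$ and $G = P(F) \rightthreetimes \mathbb{H}$. Combined with $\Aut(X) \cong G$ from the first step, this yields the theorem.

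I expect the main obstacle to be the first paragraph: not the linearity itself, which is handed to us by the earlier lemmas, but the bookkeeping needed to confirm that the block-respecting conditions of Corollary \ref{x} and the Fermat-type normal form of Proposition \ref{ds} really force a \emph{monomial} matrix rather than a merely linear one, so that the permutation–diagonal factorization is available. Once that is in place, the remaining steps are a routine check that the permutation part preserves $F$ exactly and the diagonal part lands in $\mathbb{H}$.
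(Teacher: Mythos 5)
Your proposal is correct and follows exactly the route the paper intends: the paper states the theorem as an immediate consequence of Corollary \ref{x} and Lemma \ref{y} (every automorphism is a monomial linear map respecting the blocks) and leaves the rest implicit, while you supply the natural completing details --- the factorization $M = PD$, the observation that $F \mid M^*F$ forces $M^*F = cF$ so that $P \in P(F)$ and $D \in \mathbb{H}$, and the verification that $\mathbb{H}$ is normal with trivial intersection. This is a faithful elaboration of the paper's argument rather than a different approach.
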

	
	Sufficient conditions for such varieties to be rigid are given in \cite[Example 4.6]{Kik}:
	\begin{prop}
		Assume $\sum l^{-1}_{ij} + \sum q^{-1}_i \leqslant \dfrac{1}{m + s - 2}$. Then $\A/(F)$ is rigid.
	\end{prop}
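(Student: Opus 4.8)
The plan is to rephrase rigidity as the nonexistence of locally nilpotent derivations and then to contradict the exponent inequality. Writing $B=\A/(F)$, the variety $X=\mathrm{Spec}\,B$ is rigid exactly when $B$ carries no nonzero locally nilpotent derivation (equivalently, no nontrivial $\mathbb{G}_a$-action), so I would assume a derivation $\partial\neq0$ exists and aim to violate $\sum l_{ij}^{-1}+\sum q_i^{-1}\leqslant\frac{1}{m+s-2}$. Since each monomial of $F$ has one and the same $\mathbb{T}_0$-weight, $F$ is quasi-homogeneous for the grading induced by $\mathbb{T}_0$, and by Remark~\ref{po} every generator has strictly positive weight, so $B=\bigoplus_{d\geqslant0}B_d$ is positively graded with $B_0=\K$.

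First I would reduce to the homogeneous case: by the standard homogenization of a locally nilpotent derivation on a positively graded ring with $B_0=\K$, a suitable homogeneous component of $\partial$ is again a nonzero locally nilpotent derivation, so I may take $\partial$ homogeneous. Lifting it to a homogeneous derivation $D=\sum_{i,j}a_{ij}\,\partial_{X_{ij}}+\sum_{i,k}b_{ik}\,\partial_{Y_{ik}}$ of $\A$ preserving the ideal $(F)$, I would record the resulting identity
\[
\sum_{i,j}a_{ij}\,l_{ij}\,\frac{M_i}{X_{ij}}+\sum_{i,k}b_{ik}\,q_i\,Y_{ik}^{q_i-1}=h\,F,\qquad M_i=X_{i1}^{l_{i1}}\cdots X_{in_i}^{l_{in_i}},
\]
where $h$ is homogeneous of degree $\deg D$, and analyze it monomial by monomial using the separated structure.

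The core of the argument is to convert this identity, together with local nilpotency, into a weighted-degree estimate on the generators and to show it is incompatible with the hypothesis. The geometric mechanism I would exploit is that a nontrivial $\mathbb{G}_a$-action, whose general orbits are affine lines, endows $X$ with an $\mathbb{A}^1$-fibration; the presence of such a fibration bounds the normalized weights of the generators from above, whereas the assumed inequality places them in the ``general type'' regime where no such fibration can exist, forcing $\partial=0$. As a sanity check, when $m+s=3$ this specializes to the classical bound $\sum a^{-1}\leqslant1$ for rigidity of Pham--Brieskorn surfaces. I expect the main obstacle to be exactly this weighted-degree bookkeeping across the $m$ product blocks and the $s$ power blocks, showing that no distribution of $D$ among them is compatible with local nilpotency; this is the computation carried out in \cite[Example~4.6]{Kik}, which I would either invoke directly or reproduce in the present normalization.
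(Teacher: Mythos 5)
Your proposal matches the paper: the paper gives no independent proof of this proposition, stating it as a direct consequence of \cite[Example 4.6]{Kik}, which is exactly the computation you invoke at the end. Your preliminary reduction (rigidity as the absence of a nonzero locally nilpotent derivation, homogenization with respect to the positive grading coming from $\mathbb{T}_0$, and lifting to a derivation $D$ of $\A$ with $D(F)=hF$) is a correct sketch of the standard ABC-type argument underlying that citation, so the two routes are essentially the same.
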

	\begin{ex}
		Let $F = X_1^{10}X_2^{11} + Y_1^{10} + Y_2^{10} + Y_3^{10}$ and $X = \mathbb{V}(F)$.\\
		Then $X$ is rigid since $\dfrac{1}{10} + \dfrac{1}{10} + \dfrac{1}{11} + \dfrac{1}{10} + \dfrac{1}{10} < \dfrac{1}{2}$.\\
		So $\mathrm{Aut}(X) = S_3\rightthreetimes\left(\left(\Z/10\Z\right)^{2}\times\mathrm{\mathbb{T}}\right)$.
	\end{ex}

\end{document}